\newtheorem{thm}{Theorem}[section]
\newtheorem{lem}[thm]{Lemma}
\newtheorem{prop}[thm]{Proposition}
\newtheorem{cor}[thm]{Corollary}
\newtheorem{qn}[thm]{Question}
\theoremstyle{definition}
\newtheorem{defn}[thm]{Definition}
\theoremstyle{remark}
\DeclareMathOperator{\disc}{disc}
\DeclareMathOperator{\Cl}{Cl}
\DeclareMathOperator{\Sym}{Sym}
\DeclareMathOperator{\Mat}{Mat}
\newcommand{\GL}{\mathrm{GL}}
\newcommand{\SL}{\mathrm{SL}}
\newcommand{\FF}{\mathbb{F}}
\newcommand{\QQ}{\mathbb{Q}}
\newcommand{\PP}{\mathbb{P}}
\newcommand{\RR}{\mathbb{R}}
\newcommand{\VV}{\mathbb{V}}
\newcommand{\ZZ}{\mathbb{Z}}
\newcommand{\C}{\mathcal{C}}
\newcommand{\E}{\mathcal{E}}
\newcommand{\G}{\mathcal{G}}
\renewcommand{\L}{\mathcal{L}}
\newcommand{\M}{\mathcal{M}}
\newcommand{\R}{\mathcal{R}}
\renewcommand{\S}{\mathcal{S}}
\newcommand{\T}{\mathcal{T}}
\newcommand{\V}{\mathcal{V}}
\newcommand{\WW}{\mathcal{W}}
\newcommand{\ba}{\overline}
\newcommand{\bs}{\backslash}
\newcommand{\tensor}{\otimes}
\renewcommand{\to}{\mathop{\rightarrow}\limits}
\newcommand{\size}[1]{\lvert #1 \rvert}
\newcommand{\Size}[1]{\left\lvert #1 \right\rvert}
\newcommand{\intsec}{\cap}
\newcommand{\union}{\cup}
\newcommand{\isom}{\cong}
\newcommand{\<}{\left\langle}
\renewcommand{\>}{\right\rangle}
\renewcommand{\(}{\left(}
\renewcommand{\)}{\right)}
\newcommand{\ignore}[1]{}
\renewcommand{\epsilon}{\varepsilon}
\newcommand{\mat}[4]{%
  \begin{bmatrix}
    #1 & #2 \\
    #3 & #4
  \end{bmatrix}
}
\newcommand{\vect}[2]{%
  \begin{bmatrix}
    #1 \\
    #2
  \end{bmatrix}
}
\newcommand{\smallvect}[2]{%
  \left[\begin{smallmatrix}
    #1 \\
    #2
  \end{smallmatrix}\right]
}
\newcommand{\tall}{\vphantom{\ensuremath{^{2^2}}}}
\numberwithin{equation}{section}
\begin{document}
\title{Diophantine approximation on conics}
\author{Evan M. O'Dorney}
\address{Department of Mathematical Sciences, Carnegie Mellon University, Pittsburgh, PA}

\subjclass[2020]{Primary 11J06}

\date{September 20, 2022}


\begin{abstract} 
  Given a conic $\mathcal{C}$ over $\mathbb{Q}$, it is natural to ask what real points on $\mathcal{C}$ are most difficult to approximate by rational points of low height. For the analogous problem on the real line (for which the least approximable number is the golden ratio, by Hurwitz's theorem), the approximabilities comprise the classically studied Lagrange and Markoff spectra, but work by Cha--Kim and Cha--Chapman--Gelb--Weiss shows that the spectra of conics can vary. We provide notions of approximability, Lagrange spectrum, and Markoff spectrum valid for a general $\mathcal{C}$ and prove that their behavior is exhausted by the special family of conics $\mathcal{C}_n : XZ = nY^2$, which has symmetry by the modular group $\Gamma_0(n)$ and whose Markoff spectrum was studied in a different guise by  A.~Schmidt and Vulakh. The proof proceeds by using the Gross-Lucianovic bijection to relate a conic to a quaternionic subring of $\operatorname{Mat}^{2\times 2}(\mathbb{Z})$ and classifying invariant lattices in its $2$-dimensional representation.
\end{abstract}

\maketitle

\section{Introduction}

Let
\begin{equation}\label{eq:conic}
  \Phi(X,Y,Z) = AX^2 + BXY + CY^2 + DXZ + EYZ + FZ^2
\end{equation}
be a ternary quadratic form whose coefficients $A,\ldots, F$ are integers. Suppose that $\Phi$ is \emph{isotropic} over $\QQ$, that is, there is a vector $P \in \QQ^3\bs \{0\}$ with $\Phi(P) = 0$, or equivalently a point $[P] \in \C(\QQ)$ on the associated conic $\C = \C_{\Phi} = \VV(\Phi = 0)$. (Throughout this paper, if $P = (x_1,\ldots, x_n)$ is a nonzero point in affine space, we denote by $[P] = [x_1 : \ldots : x_n]$ the corresponding point in projective space.) A natural question is the following:
\begin{qn}
  How difficult is it to approximate a general point $[\Xi] \in \C(\RR)$ by rational points $[P] \in \C(\QQ)$ of small height? What points on $\C$ are the hardest to approximate in this way?
\end{qn}
Over $\QQ$, all such conics $\C$ are projectively equivalent; however, over the integers $\ZZ$, the behavior of the heights of rational points can vary significantly from one conic to another.

The \emph{discriminant} of $\Phi$ is four times the determinant of the associated matrix,
\begin{equation}
  \Delta_\Phi = 4 \det \begin{bmatrix}
    A & B/2 & D/2 \\
    B/2 & C & E/2 \\
    D/2 & E/2 & F
  \end{bmatrix}.
\end{equation}
Nonsingularity is equivalent to $\Delta_\Phi \neq 0$. By flipping the sign of the equation, we may assume $\Delta_\Phi > 0$, so $\Phi$ has signature $(1,2)$ as a quadratic form. The quadratic form $\Phi$ determines a bilinear form
\[
  \<P, Q\>_\Phi = \frac{\Phi(P + Q) - \Phi(P) - \Phi(Q)}{2}.
\]
We make the following definitions.
\begin{defn} Let $\Phi$ be an isotropic integer ternary quadratic form and $\C$ its associated conic.
  \begin{enumerate}[$($a$)$]
    \item If $[\Sigma] \in \PP^2(\RR)$ is a point with $\Phi(\Sigma) < 0$, let $[\Xi], [\Xi']$ be the points on $\C$ whose tangents intersect at $[\Sigma]$; they are given by intersecting $\C$ with the line $\<\Sigma, -\> = 0$. We define the \emph{approximability} of the pair $(\Xi, \Xi')$ to be the supremum
    \begin{equation} \label{eq:mu_Sigma}
      \mu([\Xi], [\Xi']) = \sup_{P \in \C(\ZZ)} \frac{\sqrt{\Delta_\Phi \size{\Phi(\Sigma)}}}{\size{\<\Sigma, P\>_\Phi}} \in \RR \union \{\infty\},
    \end{equation}
    and we define the \emph{Markoff spectrum} $\M(\C)$ of $\C$ to be the set of real values attained by $\mu([\Xi], [\Xi'])$ as  $[\Sigma]$ ranges over points with $\Phi(\Sigma) < 0$, or equivalently as $[\Xi], [\Xi']$ range over pairs of distinct points on $\C$.
    \item If $[\Xi] \in \C(\RR)$ is a point lying on $\C$, then let $[\Sigma] \neq [\Xi]$ be a point on the tangent line to $\C$ at $\Xi$; in other words,
    \[
      \<\Sigma, \Xi\> = 0.
    \]
    (We will prove in Lemma \ref{lem:well-def} that the choice of $\Sigma$ is immaterial.) We define the \emph{approximability} of $[\Xi]$ to be the limit supremum
    \begin{equation} \label{eq:lambda_Xi}
        \lambda\([\Xi]\) = \limsup_{P \in \C(\ZZ), [P] \to [\Xi]}\frac{\sqrt{\Delta_\Phi \size{\Phi(\Sigma)}}}{\size{\<\Sigma, P\>_\Phi}} \in \RR \union \{\infty\},
    \end{equation}
    and we define the \emph{Lagrange spectrum} $\L(\C)$ of $\C$ to be the set of real values attained by $\lambda(\Xi)$.
  \end{enumerate}
Strictly speaking, we should use the notations $\M(\C)$, $\L(\C)$ only when $\Phi$ is primitive, as scaling the equation defining $\C$ will scale the spectra likewise.
\end{defn}
The following is immediate from the definitions:
\begin{prop}~\label{prop:trans}
\begin{enumerate}[$($a$)$]
  \item\label{trans:pt} If $\Xi, \Xi' \in \C(\RR)$ are related by an $\SL_3(\ZZ)$-symmetry of $\C$, then $\lambda(\Xi) = \lambda(\Xi')$.
  \item\label{trans:conic} If two conics $\C$ and $\C'$ are related by an $\SL_3(\ZZ)$-transformation, they have the same Lagrange and Markoff spectra.
\end{enumerate}
\end{prop}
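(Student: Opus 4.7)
The plan is to verify that every ingredient in \eqref{eq:mu_Sigma} and \eqref{eq:lambda_Xi}---the discriminant, the bilinear form, the integer points on the conic, and the topology on $\C(\RR)$---transforms equivariantly under $\SL_3(\ZZ)$, so that the suprema and limit suprema are unchanged. Part (a) is the special case of (b) where $\C = \C'$ and $\gamma$ lies in the stabilizer of $\C$, so it suffices to prove (b).

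Fix $\gamma \in \SL_3(\ZZ)$ with $\gamma \C = \C'$. Both $\Phi \circ \gamma^{-1}$ and $\Phi'$ are primitive integer ternary forms defining $\C'$, hence agree up to sign; and since $\Delta_{\Phi \circ \gamma^{-1}} = \det(\gamma)^2 \Delta_\Phi = \Delta_\Phi > 0$ matches the positive-discriminant normalization of $\Phi'$, we conclude $\Phi' = \Phi \circ \gamma^{-1}$ exactly. It then follows directly from the definition of $\<\cdot,\cdot\>$ that
\[
  \<\gamma P, \gamma Q\>_{\Phi'} = \<P, Q\>_\Phi \quad \text{for all } P, Q \in \RR^3.
\]
Meanwhile $P \mapsto \gamma P$ is a bijection $\ZZ^3 \to \ZZ^3$ (as $\gamma^{\pm 1} \in \Mat_3(\ZZ)$) and a homeomorphism of $\RR^3$, descending to a homeomorphism of $\PP^2(\RR)$ carrying $\C(\ZZ)$ to $\C'(\ZZ)$ and $\C(\RR)$ to $\C'(\RR)$.

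With these correspondences in hand, the proof reduces to substitution. The tangency condition $\<\Sigma, \Xi\>_\Phi = 0$ becomes $\<\gamma\Sigma, \gamma\Xi\>_{\Phi'} = 0$, the sign condition $\Phi(\Sigma) < 0$ becomes $\Phi'(\gamma\Sigma) < 0$, and $[P] \to [\Xi]$ in $\C(\RR)$ becomes $[\gamma P] \to [\gamma\Xi]$ in $\C'(\RR)$. So the numerator and denominator in \eqref{eq:mu_Sigma} and \eqref{eq:lambda_Xi} match term by term, giving $\mu([\Xi], [\Xi']) = \mu([\gamma \Xi], [\gamma \Xi'])$ and $\lambda([\Xi]) = \lambda([\gamma \Xi])$. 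Taking suprema yields $\M(\C) = \M(\C')$ and $\L(\C) = \L(\C')$; part (a) is recovered by specializing to $\C = \C'$ and $\Xi' = \gamma\Xi$.

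No step poses a genuine obstacle; the whole argument is bookkeeping that the definitions are $\SL_3(\ZZ)$-equivariant. The one place warranting a moment's attention is establishing $\Phi' = \Phi \circ \gamma^{-1}$ rather than mere agreement up to a nonzero rational scalar, which is exactly where the normalization to primitive forms of positive discriminant enters.
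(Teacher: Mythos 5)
Your proof is correct and is exactly the equivariance bookkeeping the paper has in mind: the paper offers no written proof, declaring the proposition ``immediate from the definitions,'' and your argument simply makes that explicit. The one point you rightly single out---pinning down $\Phi' = \Phi\circ\gamma^{-1}$ on the nose via primitivity and the positive-discriminant normalization, rather than up to a scalar---is the only place any care is needed, and you handle it correctly.
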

Thus we need only consider the finitely many $\SL_3(\ZZ)$-classes of conics with each discriminant. We will devote this paper to understanding the Lagrange and Markoff spectra of various conics.

\subsection{Previous work}\label{sec:prev}
The simplest conic, having discriminant $1$, is the Veronese curve
\[
  \C_1 : X Z - Y^2 = 0.
\]
A real point $\Sigma = (c, -\frac{b}{2}, a)$ gives a real indefinite quadratic form
\[
  f(s,t) = a s^2 + b s t + c t^2.
\]
Since the rational points on $\C_1$ are given in lowest terms by $[s^2 : s t : t^2]$ for $s, t$ coprime integers, we have
\[
  \mu\([\Xi], [\Xi']\) = \sup_{s,t \in \ZZ^2 \bs \{0\}} \frac{\sqrt{4ac - b^2}}{\size{as^2 + bst + ct^2}} = \sup_{s,t \in \ZZ^2 \bs \{0\}} \frac{\sqrt{\disc f}}{f(s,t)},
\]
so the Markoff spectrum of $\C$ is none other than the classical \emph{Markoff spectrum} $\M$ (see Cusick and Flahive \cite[p.~1]{CusickFlahive}). For ease of comparison, we may note that if $f(s,1)$ has roots $\xi, \xi' \in \RR$, then the associated tangency points are $[\Xi] = [\xi^2 : \xi : 1]$ and $[\Xi'] = [\xi'^2 : \xi' : 1]$, and
\begin{equation} \label{eq:mu_xi_xi'}
  \mu\([\Xi], [\Xi']\) = \sup_{s,t \in \ZZ^2 \bs \{0\}} \frac{\size{\xi - \xi'}}{t^2\Size{\dfrac{s}{t} - \xi}\Size{\dfrac{s}{t} - \xi'}} \eqqcolon \mu(\xi,\xi').
\end{equation}

Likewise, if $[\Xi] = [\xi^2 : \xi : 1]$ is an irrational point on $\C$, then, taking $\Sigma = [2\xi : 1 : 0]$, we get
\begin{equation} \label{eq:lambda_xi}
  \lambda\([\Xi]\) = \limsup_{\frac{s}{t} \in \QQ, \frac{s}{t} \to \xi} \frac{1}{\xi t^2 - st} = \limsup_{\frac{s}{t} \in \QQ, \frac{s}{t} \to \xi} \frac{1}{t^2 \Size{\dfrac{s}{t} - \xi}} \eqqcolon \lambda(\xi),
\end{equation}
so the Lagrange spectrum of $\C$ is none other than the classical \emph{Lagrange spectrum} $\L$ (\cite[p.~1]{CusickFlahive}).

The spectra $\L \subsetneq \M$ are closed subsets of $\RR$ with the following well-known properties \cite{CusickFlahive}:
\begin{itemize}
  \item On the interval $[\sqrt{5}, 3)$, $\L$ and $\M$ are discrete and equal, with all the least approximable points being quadratic irrationals parametrized by the binary tree of integer solutions to the \emph{Markoff equation} $x^2 + y^2 + z^2 = 3xyz$ \cite{MarkoffI,MarkoffII}.
  \item On the interval $[3, 4.5278\ldots)$, $\L$ and $\M$ are totally disconnected and difficult to describe fully. They are unequal (\cite{Freiman1968}; \cite{CusickFlahive}, ch.~3). They remain a topic of current research \cite{Erazo22}.
  \item Both $\L$ and $\M$ contain the infinite ray $[4.5278\ldots, \infty)$, known as \emph{Hall's ray.} The precise value of the lower endpoint
  \[
    4.5278\ldots = \frac{2221564096 + 283748\sqrt{462}}{491993569}
  \]
  was determined by Freiman in 1973 (\cite{FreimanHall}; \cite{Freiman1975}; \cite[ch.~4]{CusickFlahive}).
\end{itemize}

In \cite{ChaIntrinsic}, Cha and Kim consider the unit circle
\[
  \C : -X^2 - Y^2 + Z^2 = 0,
\]
a conic of discriminant $4$, finding that the initial discrete segment of $\L(\C)$ has a similar binary tree structure parametrized by the solutions of another Markoff-like equation $2x^2 + y^2_1 + y^2_2 = 4xy_1y_2$. This equation appeared earlier in the work of A.~Schmidt \cite{SchMinI} on what we will call the $2$-Markoff spectrum, which is more directly related to the conic
\[
  \C_2 : XZ = 2Y^2
\]
of discriminant $2$. The connection of Schmidt's work to approximation on the unit circle was apparently first noted by Kopetzky \cite{KopetzkyApprox}. In \cite{KimSim}, Kim and Sim study $\L(\C)$ and $\M(\C)$ further, in particular finding a Hall's ray. In \cite{ChaEis}, Cha, Chapman, Gelb, and Weiss consider the unique conic of discriminant $3$,
\[
  \C_3 : -X^2 - XY - Y^2 + Z^2 = 0 \sim XZ = 3Y^2,
\]
motivated by approximation of Eisenstein integers. (Here and henceforth, the relation $\sim$ denotes equivalence under a transformation in $\SL_3 \ZZ$.) The badly approximable numbers form a notably simpler structure, which we might describe as a ``rod'' (or a ``unary tree''): a single sequence, governed by a $2$-term linear recurrence, whose approximabilities converge linearly to the first accumulation point. In this paper, we study the question of Diophantine approximation on a general conic $\C$.

A few remarks are in order as regards normalization. When approximating a real point $\Xi = [\xi : \eta : 1]$ on the unit circle $\xi^2 + \eta^2 = 1$, we can take $\Sigma = [\eta : -\xi : 0]$ and get
\[
\lambda([\Xi]) = \limsup_{\substack{[a:b:c] \in \C(\ZZ),\\ [a:b:c] \to [\Xi]}} \frac{2c}{\Size{a \eta - b\xi}} = \limsup_{\substack{[a:b:c] \in \C(\ZZ),\\ [a:b:c] \to [\Xi]}} \frac{2}{\Size{\dfrac{a}{c} \eta - \dfrac{b}{c}\xi}}.
\]
Since $|a\eta - b\xi|$ is the Euclidean distance from the Pythagorean lattice point $(a,b)$ to the desired ray $\RR(\xi,\eta)$, our $\lambda$-values are $2$ times larger than those in \cite{ChaIntrinsic} and also $\sqrt{2}$ times larger than those in \cite{KimSim}, which studies $\M_{2}$ and $\L_{2}$ from yet another perspective derived from Hecke groups. Our choice of normalization is governed by the following two considerations:
\begin{itemize}
    \item If $\Xi$ is defined over a quadratic number field $\QQ[\sqrt{D}]$, the approximability $\lambda([\Xi])$ will also be defined over $\QQ[\sqrt{D}]$;
    \item $\L(\C) \subseteq \L$, $\M(\C) \subseteq \M$ will be contained in their classical analogues.
\end{itemize}

\subsection{Main result}

\begin{thm}\label{thm:main}
  Let $\C = \VV(\Phi)$ be a conic over $\QQ$ having a $\QQ$-point. There exist integers $n$ and $m$, with $nm^2 | \Delta_{\Phi}$, and a parametrization $\C \isom \PP^1$ such that, for any irrational points $[\Xi], [\Xi'] \in \C(\RR)$ with corresponding coordinates $\xi, \xi'$,
  \begin{alignat}{2}
    \lambda\([\Xi]\) &= m \cdot \max_{d|n} \lambda(d \xi) &&= m \cdot \limsup_{\frac{s}{t} \in \QQ, \frac{s}{t} \to \xi} \frac{\gcd(t, n)}{t^2 \Size{\dfrac{s}{t} - \xi}} \label{eq:lambda_main} \\
    \mu\([\Xi],[\Xi']\) &= m \cdot \max_{d|n} \mu(d \xi, d \xi') &&= m \cdot \sup_{s,t \in \ZZ^2 \bs \{0\}} \frac{\size{\xi - \xi'} \cdot \gcd(t, n)}{t^2\Size{\dfrac{s}{t} - \xi}\Size{\dfrac{s}{t} - \xi'}} \label{eq:mu_main}.
  \end{alignat}
\end{thm}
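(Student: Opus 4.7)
\emph{Proof plan.} My strategy is to realize $\C$ inside the $2$-dimensional representation of a quaternionic order via the Gross--Lucianovic bijection, decompose $\C(\ZZ)$ into finitely many $\PP^1$-families indexed by the divisors of an integer $n$, and then compute the approximability of each family directly.

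First, I would apply the Gross--Lucianovic bijection to the integer quadratic lattice $(\ZZ^3, \Phi)$ to produce a quaternion order $\R$ of discriminant $\Delta_\Phi$. Isotropy of $\Phi$ forces $\R \tensor \QQ \isom \Mat_{2\times 2}(\QQ)$, and after choosing an $\R$-invariant full-rank lattice in $\QQ^2$ we obtain an embedding $\R \hookrightarrow \Mat_{2\times 2}(\ZZ)$ exhibiting $\ZZ^2$ as a distinguished left $\R$-module. Under this correspondence, primitive integer points of $\C$ correspond bijectively to primitive rank-$1$ elements $v w^\top \in \R$, with $v \in \ZZ^2$ a column vector and $w$ a row vector. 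Next, I would classify the full-rank left $\R$-sublattices $L \subset \QQ^2$ up to homothety. These should form a finite family $\{L_d\}_{d \mid n}$ parametrized by the divisors of an integer $n$ determined by $\R$ (its level, when $\R$ is Eichler); the integer $m$ arises as the scaling factor on $\Phi$ needed to pass to an $\SL_3(\ZZ)$-primitive model of $\C$, and the Gross--Lucianovic discriminant identity yields $mn \mid \Delta_\Phi$.

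In suitable coordinates, $L_d = \ZZ \oplus d\ZZ \subset \ZZ^2$. Every primitive rank-$1$ element of $\R$ has its column vector $v$ primitive in exactly one $L_d$, giving a decomposition
\[
  \C(\ZZ) = \bigsqcup_{d \mid n} \mathcal{C}(\ZZ)_d
\]
in which each piece $\mathcal{C}(\ZZ)_d$ is a $\PP^1$-family parametrized by coprime pairs $(s, t)$ through an outer-product formula. I would fix the affine parametrization $\C \isom \PP^1$ induced by the $d = 1$ family, so that an irrational point $[\Xi]$ with coordinate $\xi$ is approached by the $d$-family precisely as $s/t \to d\xi$. A direct computation of $\<\Sigma, P\>_\Phi$ on this parametrization then yields
\[
  \frac{\sqrt{\Delta_\Phi \,\size{\Phi(\Sigma)}}}{\size{\<\Sigma, P\>_\Phi}} = \frac{m}{t^2 \size{d\xi - s/t}}.
\]
Taking the limit superior as $(s, t)$ ranges over the $d$-family, then the maximum over $d \mid n$, yields the first equality in \eqref{eq:lambda_main}; the equivalent expression with $\gcd(t, n)$ follows by reparametrizing any primitive approximation $s/t \to \xi$ as the unique member of the $d = \gcd(t, n)$-family given by $(s, t/d)$ and applying the identity $d/(t^2 \size{\xi - s/t}) = 1/((t/d)^2 \size{d\xi - sd/t})$. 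The Markoff formula \eqref{eq:mu_main} is obtained identically, with $\sup$ in place of $\limsup$ and the limit constraint $[P] \to [\Xi]$ dropped.

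The principal obstacle will be the lattice classification: pinning down $\{L_d\}_{d \mid n}$ and identifying the integers $n$ and $m$ in terms of $\R$ requires the local theory of the order $\R$ at each prime dividing $\Delta_\Phi$, together with a careful accounting of how scaling of $\Phi$ interacts with the Gross--Lucianovic correspondence.
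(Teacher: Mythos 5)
Your overall strategy --- Gross--Lucianovic, then a classification of the invariant lattices of the $2$-dimensional representation, then reduction of the approximability on $\C$ to classical approximabilities of the coordinates $d\xi$ with the $\gcd(t,n)$ reparametrization --- is exactly the paper's, and your final computation (the identity $d/(t^2\size{\xi - s/t}) = 1/((t/d)^2\size{d\xi - sd/t})$ and the passage from $\max_{d\mid n}$ to $\gcd(t,n)$) matches the concluding step of the paper's argument. The genuine gap is in the step you defer as ``the principal obstacle'': you \emph{assume} that the invariant lattices form a single chain $\{L_d\}_{d\mid n}$ up to homothety, and you attribute the factor $m$ to imprimitivity of $\Phi$ (``the scaling factor on $\Phi$ needed to pass to an $\SL_3(\ZZ)$-primitive model''). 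Both of these are false in general. The paper's central technical result (proved by induction on the Bruhat--Tits tree at each prime $p$) is that the invariant-lattice graph is the $k$-neighborhood of a path, so that globally the invariant lattices are \emph{all} primitive lattices of index dividing $m$ in some $\Lambda_d$, $d \mid n$ --- a thickened chain, not a chain --- and $m>1$ occurs already for primitive conics: for $\C_{0,0,1,p^2} : p^2 XZ = Y^2$ the invariant lattices are one lattice together with all $p+1$ of its index-$p$ sublattices, so your recipe would output $m=1$ and a chain, which is wrong. The orders arising here need not be Eichler (your parenthetical ``when $\R$ is Eichler'' is doing a lot of hidden work), so the ``level'' heuristic does not identify $n$ either.

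This is not merely a bookkeeping issue, because your decomposition $\C(\ZZ) = \bigsqcup_{d\mid n}\C(\ZZ)_d$ and the claimed per-point identity $\sqrt{\Delta_\Phi\size{\Phi(\Sigma)}}/\size{\<\Sigma,P\>_\Phi} = m/(t^2\size{d\xi - s/t})$ both presuppose the chain structure and the (incorrect) origin of $m$. Two further things are needed to close the argument. First, a lemma in the spirit of the paper's Lemma \ref{lem:to_R}: the intrinsic quality of an approximation $P$ equals the \emph{maximum} over all invariant lattices $\Lambda_i$ of the classical quality of the induced rational approximation; this requires showing that the correction factor is $[\Lambda_i : h\,\R Q_1] \geq 1$, with equality exactly when $\Lambda_i$ is (the primitive scaling of) $\R Q_1$ --- your disjoint-union picture, in which each point belongs to ``exactly one'' family, obscures why a maximum appears at all, and in fact a primitive vector of $\ZZ^2$ is typically primitive in several of the $L_d$ simultaneously. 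Second, one must show that the extra lattices of index dividing $m$ contribute a uniform multiplicative factor of $m$; the paper does this not by direct computation but by exhibiting a $\QQ$-isomorphism of $\C$ with the special conic $\C_{m,n} : mXZ = mnY^2$ preserving the invariant-lattice configuration (hence all approximabilities, by Corollary \ref{cor:immed}\ref{immed:same}) and then reading off the factor $m$ from the definitional scaling of the spectra under $\Phi \mapsto m\Phi$. Without the correct classification and these two ingredients, the proposal does not yet establish either displayed formula.
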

In words, the Lagrange and Markoff spectra are scalings by $m$ of the \emph{$n$-spectra,} which depend on $n$ alone and arise naturally from considering the conic $XZ = nY^2$. In general, any pair $(m,n)$ can be gotten from the conic
\begin{equation} \label{eq:special_conic}
  \C_{m,n} : mXZ = mnY^2.
\end{equation}
Thus we can say that every $\C$ behaves like one of the $\C_{m,n}$ for the purposes of Diophantine approximation, whether or not it is integrally isomorphic to it.

The $n$-Markoff spectrum appeared in the work of Schmidt \cite[p.~15]{SchMinII} by generalizing the work of Markoff on minima of binary quadratic forms. It is also studied by Vulakh \cite[Section 7]{VulakhMarkov}; see also the survey article of Malyshev (\cite[Section 5]{MalyshevR}; see \cite{Malyshev1981MarkovAL} for English translation). The discrete parts of the $2$- and $5$-Markoff spectra, which were described by Schmidt, have additional combinatorial structure investigated by Abe, Aitchison, and Rittaud \cite{AbeRittaudPalindromes,AbeTwoColor} The $n$-Lagrange spectrum appears to be unstudied in general. It is hoped that this paper rekindles interest in $n$-spectra, which are thus seen to be natural from multiple perspectives. On the other hand, it seems unlikely that the $n$-spectra exhaust the possibilities of Diophantine approximation even on rational curves.

\section{Well-definedness of approximability}

We first resolve a notational gap in our definitions.

\begin{lem}\label{lem:well-def} As the notation suggests, $\lambda\([\Xi]\)$ depends only on $\Xi$, not on $\Sigma$.
\end{lem}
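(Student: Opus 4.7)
The plan is to reduce the statement to an order-of-vanishing comparison on $\C$. Since the ratio inside \eqref{eq:lambda_Xi} is homogeneous of degree zero in $\Sigma$, any two admissible choices of representative differ by a transformation $\Sigma \mapsto \Sigma' = \alpha \Sigma + \beta \Xi$ with $\alpha \neq 0$. Using the two identities $\<\Sigma, \Xi\>_\Phi = 0$ (tangency) and $\Phi(\Xi) = 0$ ($\Xi \in \C$), a direct bilinear expansion gives
\[
  \Phi(\Sigma') = \alpha^2 \Phi(\Sigma) \quad \textand \quad \<\Sigma', P\>_\Phi = \alpha \<\Sigma, P\>_\Phi + \beta \<\Xi, P\>_\Phi.
\]
Consequently the ratio in \eqref{eq:lambda_Xi} for $\Sigma'$ equals the one for $\Sigma$ times the factor $\bigl|1 + (\beta/\alpha)\cdot \<\Xi, P\>_\Phi / \<\Sigma, P\>_\Phi\bigr|^{-1}$. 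The whole lemma now reduces to showing that $\<\Xi, P\>_\Phi / \<\Sigma, P\>_\Phi \to 0$ as $[P] \to [\Xi]$ along $\C$, since then this factor approaches $1$ and the two $\limsup$s coincide.

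To prove the vanishing, I would parametrize $\C$ near $\Xi$ by a smooth curve $P(t)$ with $P(0) = \Xi$ and $P'(0) \notin \RR\, \Xi$, and set $\epsilon = P(t) - \Xi = t P'(0) + O(t^2)$. Then $\<\Sigma, P(t)\>_\Phi = \<\Sigma, \epsilon\>_\Phi$ is first-order in $t$ by the tangency identity, while polarizing $0 = \Phi(P(t)) = \Phi(\Xi + \epsilon) = 2\<\Xi, \epsilon\>_\Phi + \Phi(\epsilon)$ shows that $\<\Xi, P(t)\>_\Phi = -\Phi(\epsilon)/2$ is second-order. The ratio is therefore $O(t)$ and tends to zero.

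The only subtlety, and the main thing worth checking carefully, is the transversality claim $\<\Sigma, P'(0)\>_\Phi \neq 0$, without which the denominator could vanish faster than expected. Here the argument is that the $2$-dimensional tangent plane $\{Q : \<\Xi, Q\>_\Phi = 0\}$ has $\Phi$-radical exactly $\RR\, \Xi$ (by nondegeneracy of $\Phi$), so $\Phi$ descends to a nondegenerate pairing on the $1$-dimensional quotient; since both $[\Sigma]$ and $[P'(0)]$ are nonzero in this quotient by construction, their $\Phi$-pairing is nonzero.
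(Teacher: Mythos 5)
Your proof is correct and follows essentially the same route as the paper: both reduce to the substitution $\Sigma \mapsto \alpha\Sigma + \beta\Xi$, observe that $\Phi(\Sigma)$ scales by $\alpha^2$ while the denominator acquires the term $\beta\langle\Xi,P\rangle_\Phi$, and conclude by showing $\langle\Xi,P\rangle_\Phi/\langle\Sigma,P\rangle_\Phi \to 0$ as $[P]\to[\Xi]$ via an order-of-vanishing comparison (the tangent form vanishes to order $2$ along $\C$ at $\Xi$, the polar form of $\Sigma$ to order $1$). Your explicit check of the transversality $\langle\Sigma,P'(0)\rangle_\Phi \neq 0$ just fills in a step the paper disposes of with the one-line remark that $\langle\Sigma,-\rangle_\Phi$ has a simple root at $\Xi$.
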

\begin{proof}
  Given $\Xi$, the value of $\Sigma$ is unique up to scaling and adding real multiples of $\Xi$. Scaling $\Sigma$ clearly does not affect the ratio $\sqrt{\size{\Phi(\Sigma)}}/\size{\<\Sigma, P\>}$. Under a transformation $\Sigma \to \Sigma + t \Xi$, we have
  \[
  \frac{\sqrt{\size{\Phi(\Sigma)}}}{\<\Sigma, P\>} \mapsto \frac{\sqrt{\size{\Phi(\Sigma + t\Xi)}}}{\<\Sigma + t\Xi, P\>} = \frac{\sqrt{\size{\Phi(\Sigma) + 2t\<\Sigma, \Xi\> + \Phi(\Xi)}}}{\<\Sigma, P\> + t\<\Xi, P\>} = \frac{\sqrt{\size{\Phi(\Sigma)}}}{\<\Sigma, P\> + t\<\Xi, P\>}.
  \]
  As $[P]$ approaches $[\Xi]$, the ratio
  \[
  \frac{\<\Xi, P\>}{\<\Sigma, P\>}
  \]
  of the two relevant terms in the denominator tends to $0$, since $\<\Xi, -\>$ has a double root along the line $\RR\Xi$ while $\<\Sigma, -\>$ has a simple root.
\end{proof}

\section{Standard forms of conics}
If a conic has a rational point, we may apply an $\SL_3(\ZZ)$-transformation to assume that this point is $[0:0:1]$ and also that the tangent line there is $X = 0$. Thus we need only consider forms of the shape
\begin{equation}\label{eq:CABCD}
  \Phi = \Phi_{A, B, C, D} : (X, Y, Z) \mapsto -A X^2 - B X Y - C Y^2 + D X Z = 0.
\end{equation}
We may also take $C > 0$, $D > 0$. Here the discriminant is $\Delta_\Phi = CD^2$. In particular, if $\Delta_\Phi$ is squarefree, then $D = 1$ and it is easy to see that there is only one isomorphism type of conic. However, in general, the enumeration of conics of given discriminant is a natural and nontrivial problem in its own right. Most of the existing literature concerns definite forms, that is, conics with no $\RR$-points; we believe that the indefinite case is equally worthy of study.

By a classical parametrization (see Gross and Lucianovic \cite[Proposition 4.1]{cubquat}), ternary quadratic forms $\Phi$ over $\ZZ$ up to $\GL_3(\ZZ)$-equivalence (with the $\GL_3$-action twisted by the inverse determinant) parametrize isomorphism classes of quaternion rings over $\ZZ$. Here, since our conics are equivalent over $\QQ$ to $\Phi_1(X,Y,Z) = -Y^2 + X Z$, all of our quaternion rings are lattices $\R \subset \Mat^{2\times 2}(\QQ)$ of full rank. It is easy to see that we may assume $\R \subseteq \Mat^{2\times 2}(\ZZ)$; in fact, following through the construction in \cite{cubquat} shows that a matrix presentation of the ring corresponding to $\Phi = \Phi_{A, B, C, D}$ is
\begin{equation}\label{eq:RABCD}
  \R = \R_{A,B,C,D} = \ZZ\<\mat 1001, \mat{B}{C}{-A}{0}, \mat 000D, \mat 00D0\>.
\end{equation}
Observe that $\Delta_\Phi = CD^2$ reappears as the index $[\Mat^{2\times 2}(\ZZ) : \R]$.

Since $\Mat^{2\times 2}(\QQ)$ has no outer automorphisms as a $\QQ$-algebra, we have:
\begin{prop}
  Two conics $\C_{A, B, C, D}$, $\C_{A', B', C', D'}$ are isomorphic through a transformation in $\SL_3(\ZZ)$ if and only if the corresponding quaternion rings $\R_{A,B,C,D}$, $\R_{A',B',C',D'}$ are conjugate by an element $\gamma \in \GL_2(\QQ)$.
\end{prop}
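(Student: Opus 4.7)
The plan is to combine the Gross--Lucianovic bijection already recalled with the Skolem--Noether theorem, exactly as the sentence preceding the statement foreshadows.

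For the forward direction, suppose $\C_{A,B,C,D}$ and $\C_{A',B',C',D'}$ are isomorphic via some $g \in \SL_3(\ZZ)$. First I would translate this into twisted-$\GL_3(\ZZ)$-equivalence of the defining forms $\Phi, \Phi'$: under the twisted action $h \cdot \Phi = (\det h)^{-1} \Phi \circ h^{-1}$, the element $-I \in \GL_3(\ZZ) \setminus \SL_3(\ZZ)$ sends $\Phi$ to $-\Phi$, and since $\Phi$ and $-\Phi$ cut out the same conic, twisted-$\GL_3(\ZZ)$-orbits of primitive integer forms are identified with $\SL_3(\ZZ)$-orbits of conics. Invoking the Gross--Lucianovic bijection therefore yields a $\ZZ$-algebra isomorphism $\varphi \colon \R \isom \R'$. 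Extending scalars to $\QQ$, $\varphi$ becomes a $\QQ$-algebra automorphism of $\Mat^{2\times 2}(\QQ)$; by Skolem--Noether it is inner, i.e., conjugation by some $\gamma \in \GL_2(\QQ)$, and restriction gives $\gamma \R \gamma^{-1} = \R'$.

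For the converse, given $\gamma \in \GL_2(\QQ)$ with $\gamma \R \gamma^{-1} = \R'$, conjugation by $\gamma$ is manifestly a $\ZZ$-algebra isomorphism $\R \to \R'$. Running the Gross--Lucianovic bijection backward produces a twisted-$\GL_3(\ZZ)$-equivalence $\Phi \leftrightarrow \Phi'$, which by the same sign bookkeeping as above translates into an $\SL_3(\ZZ)$-isomorphism of the conics.

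No serious obstacle is expected: the proof is essentially ``Gross--Lucianovic $+$ Skolem--Noether.'' The only work is a sign/twist verification confirming that $\SL_3(\ZZ)$-equivalence of conics matches the twisted $\GL_3(\ZZ)$-action on primitive integer forms used in Gross--Lucianovic. A minor point worth checking is that the ring isomorphism produced by the bijection is indeed a unital $\ZZ$-algebra map, so that its $\QQ$-linear extension is a genuine automorphism of $\Mat^{2\times 2}(\QQ)$ and hence inner; this is built into the Gross--Lucianovic construction but is the one spot where I would slow down.
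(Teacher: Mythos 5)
Your proposal is correct and is essentially identical to the paper's argument: the paper offers no separate proof beyond the remark that $\Mat^{2\times 2}(\QQ)$ has no outer automorphisms (Skolem--Noether) combined with the Gross--Lucianovic parametrization recalled just before the statement. Your write-up simply makes explicit the same two ingredients, plus the routine sign/twist bookkeeping that the paper leaves implicit.
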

The link between a conic and its associated matrix ring can be made more concrete:
\begin{lem}
  There is a canonical isomorphism, defined over $\QQ$, between the conic $\C$ and the projectivization $\PP(\V_\C)$ of the standard $2$-dimensional representation of its associated quaternion space $\R_\QQ = \R_\Phi \tensor_\ZZ \QQ \isom \Mat^{2\times 2}(\QQ)$.
\end{lem}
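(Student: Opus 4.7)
The plan is to construct the isomorphism $\psi \colon \PP(\V_\C) \to \C$ explicitly, sending each line $[v] \in \PP(\V_\C)$ to the class of nilpotent endomorphisms with image $\QQ v$. Working over $\QQ$, fix any identification $\R_\QQ \isom \End_\QQ(\V_\C)$ with $\V_\C \cong \QQ^2$ the standard simple module.

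The first step is to trace through the Gross--Lucianovic construction of \cite{cubquat} to verify that, under the decomposition $\R_\QQ = \QQ \cdot I \oplus \R_\QQ^0$ into scalar and trace-zero parts (so that $M \mapsto M - \frac{\tr M}{2} I$ identifies $\R_\QQ/\QQ I$ with $\R_\QQ^0$), the ternary form $\Phi$ pulls back, up to a global scalar, to the restriction of the reduced norm $M \mapsto -\det M$ to $\R_\QQ^0$. This identifies $\C$ with the projectivized locus of nilpotent (i.e., trace-zero, determinant-zero) elements inside $\PP(\R_\QQ/\QQ I)$.

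The second step is to define and analyze $\psi$. For each $[v] \in \PP(\V_\C)$, choose a nonzero $\varphi \in \V_\C^*$ vanishing on $v$; then $N_v \coloneqq v \otimes \varphi \in \End(\V_\C)$ is a rank-one nilpotent with $\im N_v = \QQ v$, unique up to scalar (since the annihilator of $v$ in $\V_\C^*$ is one-dimensional). Setting $\psi([v]) = [N_v] \in \C$ gives a well-defined map, and the assignment $[M] \mapsto [\im M]$ provides its inverse on nilpotent representatives. Both maps are visibly regular --- in coordinates $v = (p,q)^T$ one obtains the Veronese-like formula
\[
  \psi([p:q]) = \left[\begin{pmatrix} -pq & p^2 \\ -q^2 & pq \end{pmatrix}\right].
\]

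The main obstacle is the first step: pinning down the precise scalar matching $\Phi$ with the reduced norm under the Gross--Lucianovic correspondence, which amounts to bookkeeping through the explicit generators listed in \eqref{eq:RABCD}. Once that is settled, canonicity is automatic, since the construction uses only $\R_\QQ$ and its abstract simple left module $\V_\C$; any two identifications $\R_\QQ \isom \End(\V_\C)$ differ by an inner automorphism acting on $\V_\C$ through $\GL(\V_\C)$, which induces compatible automorphisms of $\PP(\V_\C)$ and $\C$ leaving $\psi$ unchanged.
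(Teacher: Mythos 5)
Your overall strategy---realize the conic as the projectivized nilpotent cone of $\R_\QQ$ and then hit it with the Veronese map $[v]\mapsto[v\otimes\varphi]$---is workable, and your second step is correct and cleanly stated. The gap is in the first step, which you defer as ``bookkeeping'' but which is in fact the entire content of the lemma, and the claim you propose to verify there is not correct as stated. The quadratic form that the reduced norm induces on $\R_\QQ^0\cong\R_\QQ/\QQ I$ is, up to scalar, the \emph{adjugate} of $\Phi$, not $\Phi$ itself: canonically $\R_\QQ/\QQ I\cong\wedge^2\E^1_\QQ$ (via $e_ie_j\mapsto e_i\wedge e_j$ in the even Clifford algebra), so the nilpotent cone naturally lives in the \emph{dual} plane $\PP((\E^1_\QQ)^*)$ and cuts out the dual conic $\C^*$. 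Concretely, if you carry out the bookkeeping you describe by matching $X,Y,Z$ to the traceless parts of the three non-identity generators in \eqref{eq:RABCD}, you get
$-\det = \bigl(\tfrac{B^2}{4}-AC\bigr)X^2-\tfrac{BD}{2}XY+\tfrac{D^2}{4}Y^2+CDXZ$,
whose $Y^2$ and $XZ$ coefficients force the impossible relation $D^2=-4C^2$ if it were proportional to $-AX^2-BXY-CY^2+DXZ$. To repair the step you need an additional idea, not bookkeeping: either compose with the canonical isomorphism $\C\cong\C^*$ sending a point to its tangent line, or identify $\E^1_\QQ$ with $\R^0_\QQ$ by multiplication by the canonical odd central element $\omega$ of the Clifford algebra, under which $(v\omega)^2=\omega^2\,\Phi(v)$ and $\Phi$ genuinely becomes a scalar multiple of the norm form. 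Without one of these, your map lands on the wrong (dual) conic and, in particular, will not reproduce the explicit parametrization $\rho_\Phi$ that the paper uses immediately afterward.

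For comparison, the paper's proof sidesteps the norm-form issue entirely: it sends a point $P\in\C(\QQ)\subset\PP(\E^1)$ to the common kernel line in $\V_\C$ of the two-dimensional left ideal $\E^{\mathrm{odd}}\cdot P\subseteq\E^{\mathrm{even}}=\R_\QQ$, checking $\dim(\E^{\mathrm{odd}}P)=2$ by a short basis computation and then that the resulting map has degree one. Once your first step is repaired, your $\psi$ is essentially the inverse of that map (a nilpotent $M$ is recovered from the left ideal annihilating its image line), but the kernel-of-a-left-ideal construction never has to decide whether it has landed on $\C$ or on $\C^*$, which is exactly the point where your sketch goes astray.
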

\begin{proof}
  We use the construction of $\R$ as the even part $\E^{\mathrm{even}}$ of the $8$-dimensional Clifford algebra $\E = \Cl(\C, \QQ)$ whose first graded piece $\E^1$ is the $3$-dimensional space on which $\C$ is defined. Let $P \in \C(\QQ)$. Viewing $P$ as an element of $\E^1$, consider
  \[
  \WW = \E^{\mathrm{odd}} \cdot P \subseteq \E^{\mathrm{even}}.
  \]
  If $(P, Q, R)$ is a basis for $\E^{1}$, then $(1, P, Q, R, PQ, QR, PR, QRP)$ is a basis for $\E$. Observe that $P \cdot P = QRP \cdot P = 0$, but $S \cdot P \neq 0$ for any $S \in \E^{1}$ not a multiple of $P$. Thus $\dim \WW = 2$. But $\WW$ is invariant under left-multiplication by $\E^{\mathrm{even}} \isom \R$, so viewing $\R_\QQ$ as $\Mat^{2\times 2} \QQ$, there must exist a nonzero vector $v \in \V_\C$ such that
  \[
  \WW = \left\{M \in \Mat^{2\times 2} \QQ : M v = 0 \right\}.
  \]
  This $v$ is unique up to scaling. It is a short calculation to show that the passage from $P$ to $v$ actually has degree $1$ and thus defines an isomorphism.
\end{proof}
In the case that $\Phi$ is placed in the form \eqref{eq:CABCD} and $\R$ is placed in the form \eqref{eq:RABCD}, we can be more specific: the isomorphism is
\begin{equation}
  \begin{aligned}
   \rho_{\Phi} : \PP(\V_\C) &\rightarrow \C \\
    \vect S T
    &\mapsto [DS^2 : DST : AS^2 + BST + CT^2].
  \end{aligned}
\end{equation}
In other words, it arises from composing the Veronese parametrization
\[
\begin{aligned}
  \rho_{\Phi_1} : \PP^1(\QQ) &\rightarrow \C_1 \\
  \vect S T
  &\mapsto [S^2 : ST : T^2]
\end{aligned}
\]
(where $\Phi_1 = -Y^2 + XZ$ and $\C_1$ is its associated conic) with a self-map of $\PP^2$, the projectivization of
\[
\begin{aligned}
  \tau : \RR^3 &\rightarrow \RR^3 \\
  (x,y,z)
  &\mapsto (D x, D y, A x + B y + C z),
\end{aligned}
\]
which sends $\C_1$ to $\C$. More precisely, we have the relation
\begin{equation}
  \label{eq:tau_qdrc_form}
  \Phi\(\tau(\Sigma)\) = \Delta_\Phi \cdot \Phi_1\(\Sigma\).
\end{equation}

\section{Invariant lattices}
The presentations of $\R_\Phi$ as a subring of $\Mat^{2\times 2}(\ZZ)$ are now parametrized by the $\R_\Phi$-invariant lattices in $\V_\C$, up to scaling. There are finitely many, and we can scale them to be primitive in $\ZZ^2$, necessarily of index dividing $CD$ since $\R_\Phi \supset CD\Mat^{2\times 2}(\ZZ)$. For example, the conic $\C_{0,0,n,1}$ of discriminant $n$ has associated quaternion ring
\[
  \R_{0,0,n,1} = \left\{\mat abcd \in \Mat^{2\times 2}(\ZZ) : b \equiv 0 \mod n\right\}
\]
and invariant lattices
\[
  \Lambda_d = \ZZ\<\vect{d}{0}, \vect{0}{1}\>,
\]
one for each divisor $d|n$.

We now relate approximability to the invariant lattices.
\begin{lem}\label{lem:to_R}
  Let $\Phi$ be a form as in Theorem \ref{thm:main}. Let $\Lambda_1, \ldots, \Lambda_r$ be the $\R_\Phi$-invariant lattices of the associated representation $\V_\C$.  Choose an isomorphism of each $\Lambda_i$ with $\ZZ^2$, which induces an isomorphism $\C \isom \PP^{1}(\QQ)$. Then the approximabilities of points and point pairs can be expressed in terms of approximabilities of real numbers in the classical sense:
  \begin{enumerate}[$($a$)$]
    \item\label{to_R:Markoff} If $[\Xi], [\Xi']$ are points on $\C$, let $\xi_i$, $\xi_i'$ be their coordinates. We have
    \[
      \mu(\Xi,\Xi') = \max_i \mu(\xi_i, \xi_i'),
    \]
    where the right-hand $\mu$'s are defined in \eqref{eq:mu_xi_xi'}.
    \item\label{to_R:Lagrange} If $[\Xi]$ is a point on $\C$, let $\xi_i \in \RR \union \{\infty\}$ be its coordinate in the $i$th parametrization. We have
    \[
    \lambda(\Xi) = \max_i \lambda(\xi_i),
    \]
    where the right-hand $\lambda$'s are defined in \eqref{eq:lambda_xi}
  \end{enumerate}
  In particular, if $\V_\C$ has only one invariant lattice, then $\M(\C) = \M$ and $\L(\C) = \L$ are the classical Markoff and Lagrange spectra.
\end{lem}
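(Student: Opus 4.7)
The plan is to translate the sup over primitive integer points $P\in\C(\ZZ)$ in \eqref{eq:mu_Sigma} into a max over the finitely many invariant lattices $\Lambda_i$ of a sup over primitive $(s,t)\in\ZZ^2$ in each parametrization $\rho_i\colon\ZZ^2\cong\Lambda_i\hookrightarrow\V_\C\xrightarrow{\rho_\Phi}\C\subset\PP^2$.

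The first step is a direct computation relating the classical Markoff ratio in $\Lambda_i$-coordinates to the conic ratio $R(P_\circ)\coloneqq \sqrt{\Delta_\Phi|\Phi(\Sigma)|}/|\<\Sigma,P_\circ\>_\Phi|$. For a primitive $(s,t)\in\ZZ^2$, the integer vector $\rho_i(s,t)\in\ZZ^3$ need not be primitive; writing $\rho_i(s,t)=c_i(s,t)\,P_\circ$ with $P_\circ$ the primitive representative, and using the Veronese factorization $\rho_\Phi=\tau\circ\rho_{\Phi_1}$ together with the identity $\Phi(\tau(\Sigma))=\Delta_\Phi\Phi_1(\Sigma)$, one gets an identity of the form
\[
\frac{|\xi_i-\xi_i'|}{t^2|s/t-\xi_i||s/t-\xi_i'|} \;=\; \alpha_i(s,t)\,R(P_\circ),
\]
where $\alpha_i(s,t)\in(0,1]\cap\QQ$ is an explicit scaling factor built from $c_i(s,t)$ and the covolume of $\Lambda_i$ in $\V_\C$.

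The crucial second step is to show that for every primitive $P_\circ\in\C(\ZZ)$ we have $\max_i\alpha_i(P_\circ)=1$. The bound $\alpha_i\le 1$ is automatic from the integrality of $\rho_i(s,t)$. Equality is the substance of the lemma: each primitive $P_\circ$ determines a canonical $\R_\Phi$-invariant lattice $\Lambda(P_\circ)\subseteq\V_\C$ --- the right annihilator of the left ideal $\E^{\mathrm{odd}}\cdot P_\circ\cap\R_\Phi$ produced by the Clifford-algebra construction already used to identify $\C\cong\PP(\V_\C)$ --- and for this lattice the parametrization $\rho_i$ sends the primitive $\Lambda(P_\circ)$-generator of the line of $P_\circ$ exactly to $P_\circ$, with no scaling. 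I expect this covering property --- that $\Lambda(P_\circ)$ appears among the finitely many $\Lambda_i$ and is ``clean'' for $P_\circ$ --- to be the main obstacle, as it essentially uses the Gross--Lucianovic correspondence between invariant lattices and left-ideal classes of $\R_\Phi$.

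Combining the two steps, since $\max$ over a finite set commutes with $\sup$, we obtain
\[
\max_i\mu(\xi_i,\xi_i') \;=\; \sup_{P_\circ}\Bigl(\max_i\alpha_i(P_\circ)\Bigr)\,R(P_\circ) \;=\; \sup_{P_\circ}R(P_\circ) \;=\; \mu(\Xi,\Xi'),
\]
proving (a). Part (b) follows by the same argument with $\limsup$ in place of $\sup$, using that $\max$ over the finite index set commutes with $\limsup$ as well.
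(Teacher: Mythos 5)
Your proposal follows essentially the same route as the paper: both decompose the conic ratio into the classical ratios over the finitely many invariant-lattice parametrizations, observe that each classical ratio is at most the conic ratio (your $\alpha_i\le 1$ is the paper's index $[\Lambda_i:h\Lambda_j]\ge 1$), and obtain equality by exhibiting, for each primitive $P\in\C(\ZZ)$, a canonical invariant lattice in which the corresponding vector is primitive. The only simplification in the paper is that this ``clean'' lattice is just $\R_\Phi Q$, the image of the primitive vector $Q\in\ZZ^2$ under the ring --- automatically invariant and containing $Q$ primitively --- so no appeal to the Clifford-algebra left-ideal construction is needed for the covering step you flag as the main obstacle.
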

\begin{proof}
Place $\Phi$ in the standard form \eqref{eq:CABCD}. In particular, $\R$ when written in the form \eqref{eq:RABCD} fixes a lattice $\Lambda = \ZZ^2$ which we may take to be $\Lambda_1$, and
\[
  [\Xi] = [D\xi^2 : D\xi : A\xi^2 + B\xi + C]
\]
corresponds to the line $\QQ\<\smallvect{\xi}{1}\>$. Let
\[
  [P] = [Ds^2 : Dst : As^2 + Bst + Ct^2]
\]
be a rational point on $\C$, and let $[P_1] = \rho_{\Phi_1}\([Q_1]\)$ and $[\Xi_1] = [\xi^2 : \xi : 1]$ be the corresponding points on the Veronese conic $\C_1$, where $Q_1 = (s,t)$. We have $[P] = \tau\([P_1]\)$, but the actual coordinates in lowest terms
\[
  P = \frac{1}{g} \tau\(P_1\)
\]
differ by the common factor
\[
  g = \gcd(Ds^2, Dst, As^2 + Bst + Ct^2).
\]
A simple calculation shows that
\[
g = \gcd_{M \in \R}(Q_1 \wedge M Q_1),
\]
the wedge product being viewed as a multiple of either generator of $\wedge^2 \Lambda_1$. But $Q_1$ is a primitive vector in $\Lambda_1$ and in $\R Q_1$, so
\[
g = [\Lambda_1 : \R Q_1].
\]
Observe that $\R Q_1$ is also an $\R$-invariant lattice so $\R Q_1 = \Lambda_j$ for some $j$, the $\Lambda_i$ being scaled to be primitive in $\ZZ^2$. Also, let $g_i = [\Lambda_1 : \Lambda_i]$, so that $g = g_j$.

Let $\phi_i : \V_\C \to \V_\C$ be a linear transformation of determinant $g_i$ sending $\Lambda_1$ to $\Lambda_i$. Denote by $\Sym^{2}\phi_i$ its symmetric square, a self-map of $\QQ^3$ and thus of $\PP^2$ preserving $\C_{1}$ and having determinant $g_i^3$. We now have the commutative diagram
\begin{gather*}
\xymatrix@!0@C=2cm@R=1.5cm{
  \V_\C \ar[r]^{\phi_i} \ar[d]^{\rho_{\Phi_1}} & \V_\C \ar[d]^{\rho_{\Phi_1}} \ar[rd]^{\rho_C} \\
  \QQ^3 \ar[r]_{\Sym^{2}\phi_i} & \QQ^3 \ar[r]_{\tau_{\C}} & \QQ^3 
} \\
\xymatrix@!0@=2cm{
\Delta_\Phi g^2 \Phi_1  & \Delta_\Phi \Phi_1 \ar@{|->}[l] & \Phi \ar@{|->}[l].
}\phantom{\Delta g}
\end{gather*}
Then $\xi_i$ is the coordinate in the upper left $\PP(\V_\C)$ corresponding to $\Xi \in \C$, and so forth. The approximating point $P_1 = \rho_{\Phi_1}(Q_1)$ maps back to a point $P_i = \rho_{\Phi_1}(Q_i)$, where $[Q_i] = \phi_i^{-1}\([Q_1]\)$, but the proper scaling is $Q_i = h \cdot \phi_i^{-1}\(Q_1\)$ where
\[
  h = \min \{n > 0 : n Q_1 \in \Lambda_i\}.
\]
By commutativity, $P_1 = h^{-2} \cdot \Sym^2 \phi_i(P_i)$. Let $\Sigma_i = \(\Sym^2\phi_i\)^{-1}(\Sigma)$. We can now compute, using the way $\C$ and its bilinear form transform under the various linear maps,
\[
  \frac{\sqrt{\Delta_\Phi \Size{\Phi(\Sigma)}}}{\Size{\<\Sigma, P\>_\Phi}}
  = \frac{\sqrt{\Delta_\Phi^2 \Size{\Phi_1(\Sigma_1)}}}{\Delta_\Phi \Size{\<\Sigma_1, \frac{1}{g_j}P_1\>_{\Phi_1}}}
  = g_j \frac{\sqrt{\size{\Phi_{1}(\Sigma_1)}}}{\Size{\<\Sigma_1, P_1\>_{\Phi_1}}}
  = g_j \cdot \frac{\sqrt{g_i^2 \size{\Phi_{1}(\Sigma_i)}}}{\Size{g_i^2\<\Sigma_i, \frac{1}{h^2} P_i\>_{\Phi_1}}}
  = \frac{g_j h^2}{g_i} \frac{\sqrt{\size{\Phi_{1}(\Sigma_i)}}}{\Size{\<\Sigma_i, P_i\>_{\Phi_1}}}.
\]
In the last expression, the fraction \[
  \frac{\sqrt{\size{\Phi_{1}(\Sigma_i)}}}{\Size{\<\Sigma_i, P_i\>_{\Phi_1}}}
\]
is the quantity appearing in \eqref{eq:mu_Sigma} and \eqref{eq:lambda_Xi} of $P_i$ as an approximation to $(\Xi_i, \Xi_i')$, respectively $\Xi_i$. By the discussion in Section \ref{sec:prev}, these determine the approximability of $(\xi_i, \xi_i')$, respectively $\xi_i$. The coefficient
\[
  \frac{g_j h^2}{g_i} = [\Lambda_i : h \Lambda_j]
\]
is at least $1$ since $h\Lambda_j = h\R Q_1 \subseteq \Lambda_i$, and equality holds when $i = j$. Hence
\begin{equation} \label{eq:main_lemma}
  \frac{\sqrt{\Delta_\Phi \Size{\Phi(\Sigma)}}}{\Size{\<\Sigma, P\>_\Phi}} = \max_i \frac{\sqrt{\size{\Phi_{1}(\Sigma_i)}}}{\Size{\<\Sigma_i, P_i\>_{\Phi_1}}}.
\end{equation}
Now the proofs for the Markoff and Lagrange spectra finally separate. For the Markoff spectrum, we take the supremum of \eqref{eq:main_lemma} over all rational points $P$ and $P_i$ and get \ref{to_R:Markoff}. For the Lagrange spectrum, we let $P$ tend to $\Xi$, so $P_i$ must tend to $\Xi_i$, and take the limit supremum of \eqref{eq:main_lemma} to get \ref{to_R:Lagrange}.
\end{proof}

At once we get some corollaries.
\begin{cor} \label{cor:immed}
  \begin{enumerate}[$($a$)$]
    \item\label{immed:R} The Lagrange and Markoff spectra of any conic are contained respectively in those of $\RR$. In particular, the $\alpha$-values less than $3$ (if any) form a discrete set and are only attained at points defined over a quadratic field.
    \item\label{immed:same} An isomorphism $\V_\C \isom \V_{\C'}$ of $\QQ$-vector spaces, preserving the set of invariant lattices, induces an isomorphism $\C \isom \C'$ of the corresponding conics that preserves the approximability of every real point.
  \end{enumerate}
\end{cor}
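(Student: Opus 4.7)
The plan is to pull both claims through Lemma \ref{lem:to_R}, which expresses $\lambda([\Xi])$ and $\mu([\Xi],[\Xi'])$ as maxima, over the finite set of $\R_\Phi$-invariant lattices in $\V_\C$, of classical approximabilities of real numbers.

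For part \ref{immed:R}, each $\lambda(\xi_i)$ lies in the classical Lagrange spectrum $\L$ by definition, and since a maximum of finitely many real numbers is attained by one of them, $\lambda([\Xi]) = \lambda(\xi_{i_0}) \in \L$ for some index $i_0$; the same argument places $\mu([\Xi],[\Xi']) \in \M$. This establishes the containment. For the second sentence, I would apply the classical structure of $\L$ and $\M$ on $[\sqrt 5, 3)$ recalled in Section \ref{sec:prev}: every value there is isolated and is attained only at quadratic irrationals. If $\lambda([\Xi]) < 3$, then the maximizing $\xi_{i_0}$ must be a quadratic irrational, and since the parametrization $\PP^1 \isom \C$ is defined over $\QQ$, the point $[\Xi]$ is defined over the quadratic field $\QQ(\xi_{i_0})$. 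Discreteness below $3$ is inherited from discreteness of the classical spectra.

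For part \ref{immed:same}, projectivizing $\phi$ gives an isomorphism $\PP(\V_\C) \isom \PP(\V_{\C'})$, which through the canonical identifications $\C \isom \PP(\V_\C)$ and $\C' \isom \PP(\V_{\C'})$ of the lemma preceding Lemma \ref{lem:to_R} descends to an isomorphism $\C \isom \C'$ of conics over $\QQ$. Since by hypothesis $\phi$ bijects the $\R_\C$-invariant lattices $\Lambda_i \subset \V_\C$ onto the $\R_{\C'}$-invariant lattices $\phi(\Lambda_i) \subset \V_{\C'}$, transporting an integer basis of each $\Lambda_i$ across $\phi$ yields parametrizations $\C \isom \PP^1$ and $\C' \isom \PP^1$ in which $[\Xi]$ and $\phi([\Xi])$ have identical coordinates $\xi_i$. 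Lemma \ref{lem:to_R} then gives $\lambda([\Xi]) = \max_i \lambda(\xi_i) = \lambda(\phi([\Xi]))$, and analogously for $\mu$; any alternative basis choice differs by an element of $\GL_2(\ZZ)$, under which the classical $\lambda$ and $\mu$ are invariant, so the answer does not depend on these choices. Neither part presents a real obstacle: Lemma \ref{lem:to_R} has already reduced approximability on $\C$ to the classical setting over $\RR$, and the corollary merely tracks this reduction under set containment in \ref{immed:R} and under a structure-preserving bijection in \ref{immed:same}.
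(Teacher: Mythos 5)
Your proposal is correct and matches the paper's intent exactly: the paper offers no separate proof, presenting the corollary as immediate from Lemma \ref{lem:to_R}, and your derivation---taking the finite maximum formula for $\lambda$ and $\mu$, noting the maximum is attained at some $\xi_{i_0}$ whose classical approximability governs the structure below $3$, and transporting coordinates across a lattice-preserving isomorphism for part (b)---is precisely the reduction the paper has in mind.
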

For example, two conics belonging to the same \emph{genus,} that is, that are locally isomorphic, have the same invariant lattices and thus the same Lagrange and Markoff spectra. For example, if $p \equiv 1 \mod 4$ is a prime and $a$ is a quadratic non-residue modulo $p$, one can show that the conics $\C_{1, 0, p, p}$ and $\C_{a^2, 0, p, p}$ of discriminant $p^2$ belong to the same genus but are not isomorphic. However, Corollary \ref{cor:immed}\ref{immed:same} is even more generally applicable. For instance, let $p$ be an odd prime and $a$ a quadratic non-residue modulo $p$. Taking $\C = \C_{-a,0,1,p}$ in \eqref{eq:RABCD} yields a quaternion ring whose only invariant lattice is $\ZZ^2$ because the second generator in \eqref{eq:RABCD} has no eigenvectors defined over $\FF_p$. Hence approximation theory on $\C$ is the same as on $\C_1$, that is to say, on $\RR$, although $\C$ and $\C_1$ are not isomorphic at $p$ nor even have the same discriminant. Thus the Lagrange and Markoff spectra are very far from being a complete invariant for isomorphism types of conics.

\section{Classification of invariant lattice configurations}
\begin{thm}
Let $\C$ be a conic over $\ZZ$ with rational points, and let $\V_\C$ be its associated $2$-dimensional representation. Then there exist positive integers $n$, $m$, with $n m^2 | \Delta_\Phi$ and a basis of $\V_\C$ such that the family of $\R_\Phi$-invariant lattices has the following simple description: a primitive lattice $\Lambda$ is invariant if and only if it has index dividing $m$ in one of the lattices
\[
\Lambda_d = \ZZ\<\vect{d}{0}, \vect{0}{1}\>, \quad d \mid n.
\]
\end{thm}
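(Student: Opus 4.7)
The plan is a local-to-global argument. Invariance of a primitive lattice $\Lambda\subset\V_\C$ under $\R_\Phi$ is tested prime-by-prime, so it suffices to classify the $(\R_\Phi\otimes\ZZ_p)$-invariant $\ZZ_p$-lattices in $\QQ_p^2$ up to scaling at each rational prime $p$ and then reassemble. For primes $p\nmid\Delta_\Phi$ the local order is already the maximal order $\Mat^{2\times 2}(\ZZ_p)$ and $\ZZ_p^2$ is the unique invariant lattice up to scaling; only the finitely many $p\mid\Delta_\Phi$ require serious work.

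At such a prime, the map $\Lambda\mapsto\End_{\ZZ_p}(\Lambda)$ identifies the invariant lattices up to scaling with the over-orders of $\R_\Phi\otimes\ZZ_p$ in $\Mat^{2\times 2}(\QQ_p)$ that are themselves maximal; these form a subset of the vertex set of the Bruhat--Tits tree $\T_p$ of $\PGL_2(\QQ_p)$. The central structural claim is that this subset is a \emph{tubular neighborhood of a geodesic segment}: the set of all tree vertices within distance $m_p$ of some geodesic of length $n_p$, for nonnegative integers $n_p,m_p$ satisfying $n_p+2m_p\le v_p(\Delta_\Phi)$. Granting this, one can choose a $\ZZ_p$-basis of $\V_\C\otimes\QQ_p$ in which the central geodesic's $n_p+1$ vertices correspond to the lattices $\ZZ_p\<\smallvect{p^k}{0},\smallvect{0}{1}\>$ for $0\le k\le n_p$, and the tubular condition translates to: primitive sublattice of some such $\Lambda_{p^k}$ of $\ZZ_p$-index dividing $p^{m_p}$.

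To prove the structural claim I would put $\Phi\otimes\ZZ_p$ in a $\GL_3(\ZZ_p)$-normal form, write down generators of $\R_\Phi\otimes\ZZ_p$ via \eqref{eq:RABCD}, and verify directly: (a) a specific geodesic of length $n_p$ whose $n_p+1$ vertices are all invariant, (b) that enlarging to distance $m_p$ around it preserves invariance, and (c) that no vertex outside this neighborhood is invariant, by exhibiting for each such vertex some element of $\R_\Phi\otimes\ZZ_p$ that fails to preserve the corresponding lattice. A cleaner alternative is to appeal to the classification of quaternion orders in $\Mat^{2\times 2}(\QQ_p)$, for which the tubular-neighborhood structure of the set of over-orders is a consequence of the standard case division into Eichler and non-Eichler types.

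The global statement then follows by patching the local $\ZZ_p$-bases into a single global $\ZZ$-basis of the underlying lattice of $\V_\C$ via the Chinese remainder theorem and setting $n=\prod_p p^{n_p}$, $m=\prod_p p^{m_p}$: in this basis the invariant primitive lattices are exactly the sublattices of $\Lambda_d$ of index dividing $m$ for some $d\mid n$, and the divisibility $nm^2\mid\Delta_\Phi$ follows by summing the local inequalities $n_p+2m_p\le v_p(\Delta_\Phi)$. The hardest step is the local structural claim, and within it the convexity of the invariant vertex set; the most subtle case is when $\R_\Phi\otimes\ZZ_p$ fails to be Eichler, i.e.~when $m_p>0$, which reflects a non-Gorenstein (equivalently, $p$-non-primitive) contribution of $\Phi$ at $p$.
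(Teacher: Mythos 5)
Your skeleton coincides with the paper's: reduce to a local statement at each $p\mid\Delta_\Phi$, identify invariant lattices (up to scaling) with vertices of the Bruhat--Tits tree $\T$, claim that the invariant set is the radius-$m_p$ neighborhood of a geodesic of length $n_p$ with $n_p+2m_p\le v_p(\Delta_\Phi)$, and patch bases via CRT. That structural claim is exactly the paper's Lemma \ref{lem:nbhd}, and it is where all of the content of the theorem lives --- but you do not prove it. You offer two routes and execute neither. Route (a) (normal forms of $\Phi$ over $\ZZ_p$, then direct verification of which vertices are invariant) is a genuine case analysis, delicate at $p=2$ and especially for your step (c), ruling out every vertex outside the neighborhood; nothing in your writeup indicates how those verifications go. Route (b) (citing the classification of orders in $\Mat^{2\times 2}(\QQ_p)$) cannot be dismissed as ``a consequence of the standard case division into Eichler and non-Eichler types'': to make it work you need, at minimum, (i) the Gorenstein-closure decomposition $\OO=\ZZ_p+p^k\OO'$ with $\OO'$ Gorenstein, (ii) the fact that passing from $\OO'$ to $\ZZ_p+p^k\OO'$ replaces the set of invariant lattice classes by its $k$-neighborhood in $\T$ --- this is precisely where $m_p>0$ comes from and is not a formal consequence of the classification --- and (iii) the fact that the invariant set of a Gorenstein order is a geodesic segment (handling the residually inert and residually ramified cases, not just Eichler versus maximal). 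None of (i)--(iii) is stated, proved, or referenced.

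For comparison, the paper proves the local claim by a self-contained induction on the number of invariant lattices: the invariant set is connected (sums $p^i\Lambda_1+p^j\Lambda_2$ of invariant lattices are invariant), each vertex has degree $0$, $1$, $2$, or $p+1$ in the invariant subgraph (governed by common eigenvectors mod $p$), and when a vertex of degree $p+1$ exists one writes $\R_p=\ZZ_p+p\breve{\R}$ with $\breve{\R}=\frac{1}{p}\R_p\cap\Mat^{2\times 2}(\ZZ_p)$ and shows that a proper primitive sublattice is $\R_p$-invariant if and only if its neighbor toward $\ZZ_p^2$ is invariant under the ring generated by $\breve{\R}$; thus the invariant set is the $1$-neighborhood of that of a strictly smaller configuration, and induction finishes, yielding \eqref{eq:diam} at the same time. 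This single ``peeling'' lemma is exactly the piece your proposal is missing. If you either prove it or supply a precise reference for the tubular-neighborhood structure of the set of maximal orders containing a given local order, your outline becomes a complete proof; as written, the central assertion is assumed rather than established.
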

\begin{proof}
We first fix a prime $p$ and prove the corresponding theorem over $\ZZ_p$. Let $\R_p$ be the completion of $\R = \R_\Phi$ at $p$. Recall that the Bruhat-Tits tree $\T$ at $p$ is an infinite $(p+1)$-regular tree whose vertices are the lattices in $\ZZ_p^2$, up to scaling, and whose edges are the pairs where one lattice has index $p$ in the other. Let $\G$ be the induced subgraph of $\T$ formed by the invariant lattices. The relevant local statement is as follows:
\begin{lem}\label{lem:nbhd}
$\G$ is the $k$-neighborhood of a path in $\T$, where the \emph{$k$-neighborhood} of a subset $\S$ consists of the vertices at most distance $k$ from $\S$. Moreover, the number of $\ell$ of edges in the path is bounded by
\begin{equation} \label{eq:diam}
  2k + \ell \leq v_p(\Delta_\Phi).
\end{equation}
\end{lem}
\begin{proof}
Observe that $\G$ has the following features:
\begin{enumerate}
  \item $\G$ is connected. Indeed, for any vertices $\Lambda_1$, $\Lambda_2$ of $\G$, the path from $\Lambda_1$ to $\Lambda_2$ is made from sums $p^{i}\Lambda_1 + p^j\Lambda_2$, which are also $\R_p$-invariant.
  \item $\G$ is finite. Indeed, the diameter of $\G$ is at most $v_p(\Delta_\Phi)$ since $\Delta_\Phi \Mat^{2\times 2}(\ZZ) \subseteq \R_\Phi$.
  \item Each vertex $\Lambda$ of $\G$ has degree $0$, $1$, $2$, or $p+1$. Indeed, if we write $\R_p$ as a matrix ring using a basis of $\Lambda$, the neighbors of $\Lambda$ are the lattices $\<v\> + p\Lambda$, where $v$ is a simultaneous eigenvector of the reductions of elements of $\R_p$ mod $p$. If $\R_p$ contains a non-scalar matrix mod $p$, then this matrix has at most two eigenvectors, up to scaling; otherwise, any $v$ will work.
\end{enumerate}
We now proceed by induction on the number of vertices of $\G$. If there are no vertices of degree $p+1$, then $\G$ is a path. Otherwise, assume that $\ZZ_p^2$ is a vertex of degree $p+1$. Then $\R_p = \ZZ_p + p\breve{\R}$, where $\breve{\R} = \frac{1}{p}\R_p \intsec \Mat^{2\times 2} \ZZ_p$ is a lattice that need not be a ring. Let $\R'$ be the ring generated by $\breve{\R}$. Observe that $\R'$ is a lattice of finite index in $\Mat^{2\times 2} \ZZ_p$ and so corresponds to some conic over $\ZZ_p$.

We claim that a primitive lattice $\Lambda \subsetneq \ZZ_p^2$ is $\R_p$-invariant if and only if its neighbor in the direction of $\ZZ_p^2$,
\[
  \Lambda' = \frac{1}{p} \Lambda \intsec \ZZ_p^2,
\]
is $\R'$-invariant. For the forward direction, it suffices to show that $\Lambda'$ is $\breve{\R}$-invariant. Let $p^k = [\ZZ_p^2 : \Lambda]$, and observe that
\[
  \Lambda' = \Lambda + p^{k-1}\ZZ_p^2
\]
so
\begin{align*}
  \breve{\R} \Lambda' &= \(\frac{1}{p} \R_p \intsec \Mat^{2\times 2}\ZZ_p\)\(\Lambda + p^{k-1} \ZZ_p^2 \) \\
  &\subseteq \(\frac{1}{p} \R_p \Lambda \intsec \ZZ_p^2 \) + p^{k-1} \ZZ_p^2 \\
  &\subseteq \(\frac{1}{p} \Lambda \intsec \ZZ_p^2\) + p^{k-1} \ZZ_p^2 \\
  &= \Lambda'. 
\end{align*}
Conversely, if $\R'\Lambda' \subseteq \Lambda'$, then $\R_p\Lambda \subseteq (\ZZ_p + p\R')\Lambda \subseteq \Lambda + p\R'\Lambda' \subseteq \Lambda + p\Lambda' = \Lambda$.

Let $\G'$ be the graph of invariant lattices of $\R'$. We have now shown that $\G$ is the $1$-neighborhood of $\G'$. Since $\G'$ has fewer vertices than $\G$, we can apply the induction hypothesis to deduce that $\G'$ is a $k$-neighborhood of a path. Hence $\G$ is the $(k+1)$-neighborhood of this path, completing the induction.

Since the $k$-neighborhood of a path of length $\ell$ has diameter $2k + \ell$, we also have \eqref{eq:diam}.
\end{proof}
Now any path in $\T$ can be viewed under a suitable basis as the family of lattices
\[
  \Lambda_{i, p} = \ZZ_p\<\vect{p^{i}}{0}, \vect{0}{1}\>, \quad 0 \leq i \leq \ell.
\]
The basis is far from unique; it is only necessary that the two basis vectors generate $\Lambda_{0, p}$, the lattice at an endpoint of the path, and that the second basis vector lies on the correct line modulo $p^\ell$.

We now vary $p$. Let $\Lambda_0$ be a lattice whose completions are the appropriate $\Lambda_{0, p}$ at each $p$, and we aggregate the parameters $k = k_p$ and $\ell = \ell_p$ to produce the invariants
\[
  m = \prod_p p^{k_p}, \quad n = \prod_p p^{\ell_p},
\]
with $m^2 n | \Delta_\Phi$. We then take a basis for $\Lambda_0$ whose second vector satisfies the needed congruences at the prime powers dividing $\Delta_\Phi$ to finish the proof.
\end{proof}

We can now finish the main theorem. 
\begin{proof}[Proof of Theorem \ref{thm:main}]
Let the parametrization of $\C$ be chosen so that the invariant lattices have the form of all lattices of index $m$ in $\Lambda_d$ for $d|n$. Assume first that $m = 1$. Then in Lemma \ref{lem:to_R}, the $\xi_i$ are the coordinates of a single ray in each of the $\Lambda_d$, which are related by $\xi_d = d\xi$. So the approximability of $\Xi$ is the maximum of those of the $d\xi$. We can also derive the alternative formulas for the approximability in Theorem \ref{thm:main} as follows. When expressing this in terms of rational approximations to $\xi$, a single value $s/t$ is seen to give the highest quality of $ds/t$ when $d$ is chosen to minimize the numerator and denominator, namely $d = \gcd(t, n)$, as desired.

When $m > 1$, we do not proceed directly. Instead, we note that $\C$ is isomorphic over $\QQ$ to the special conic $\C_{m,n} : m XZ = mn Y^2$, via a map that may not be integral over $\ZZ$ but preserves all the invariant lattices. By Corollary \ref{cor:immed}\ref{immed:same}, this map preserves approximabilities of points. Now the factor of $m$ when passing from $\C_{1,n}$ to $\C_{m,n}$ is seen, directly from the definition, to scale approximabilities by a factor of $m$.
\end{proof}

The multiplier $m$ acts on the Lagrange and Markoff spectra in a predictable way, scaling all approximabilities by $m$. This happens most clearly when the equation for $\C$ is multiplied through by $m$, but it can also happen that $m > 1$ for primitive conics. For instance, the invariant lattices of the conic $\C_{0,0,1,p^2} : p^2 XZ = Y^2$ consist of one lattice and all its sublattices of index $p$, the same as for $\C_1$ under the imprimitive form $-pY^2 + p XZ$. Accordingly, we can focus on the Markoff spectrum of $\C_{1,n}$, which Schmidt calls the $n$-spectrum in \cite[p.~15]{SchMinII}, and its Lagrange analogue.

\section{Unanswered questions}

In summary, we have shown that the Lagrange and Markoff spectra $\L(\C)$, $\M(\C)$ of any conic, after an integer scale factor $m$, reduce to a one-parameter family of \emph{$n$-spectra} $\L_n$, $\M_n$. The proof of Lemma \ref{lem:nbhd} is not very explicit, and it is desirable to have a formula for $m$ and $n$ in terms of the coefficients $A$, $B$, $C$, $D$.

It is hoped that this paper rekindles interest in the $n$-spectra. With a bit more work, one can likely replicate the proofs of classical results for the $n = 1$ case compiled by Cusick and Flahive \cite{CusickFlahive}:
\begin{enumerate}
  \item $\L_n$ is the closure of the set of approximabilities of quadratic irrationals \cite[Theorem 3.2]{CusickFlahive}. Likewise, $\M_n$ is the closure of the set of approximabilities of pairs of quadratic irrationals (not necessarily belonging to the same quadratic field) \cite[Theorem 3.3]{CusickFlahive}.
  \item Each $\L_n$ and $\M_n$ subdivides into an initial discrete segment, a totally disconnected segment, and a Hall's ray.
  \item The initial discrete parts of $\L_n$ and $\M_n$ coincide and are given by quadratic irrationals. Tools for understanding the initial discrete part of $\M_n$ for general $n$ have been developed by Vulakh \cite{VulakhMarkov}.
\end{enumerate}
However, the presence of the varying parameter $n$ allows for some deeper questions, for instance:
\begin{enumerate}[resume]
  \item What is the least element of $\L_n$, resp{.} $\M_n$? The smallest possible value $\sqrt{5}$ occurs for infinitely many $n$, namely those having no prime factor congruent to $2$ or $3$ modulo $5$, and with $5^2 \nmid n$. On the other hand, it is apparently possible that none of the infinitely many Markoff numbers that make up $\M_1 \intsec (0,3)$ appears in $\M_{n}$ (see the $n = 12$ case of Table \ref{tab:Mn}).
  \item What is the least limit point of $\L_n$, resp{.} $\M_n$? The data in this paper suggest that it is either an integer or a quadratic irrationality defined over the same quadratic field as $\min(\M_n)$.
  \item Is there a uniform estimate for the endpoint of Hall's ray, as $n$ varies?
  \item Is $\L_n \neq \M_n$ for all $n$?
\end{enumerate}

\appendix
\section{Examples}
We close with two tables to shed light on potential research directions. In Table \ref{tab:local}, we list conics over $\ZZ_p$ of low discriminant, together with the $k$ and $\ell$ (path length) of Lemma \ref{lem:nbhd}, which are the $p$-adic valuations of the $m$ and $n$, respectively, of Theorem \ref{thm:main}. The proof of Lemma \ref{lem:nbhd} is not entirely explicit, and one could hope for a straightforward formula for these invariants in terms of $A$, $B$, $C$, and $D$. (For brevity, the case $p = 2$ is not listed; for clearer patterns, one might better consider conics over a general extension of $\QQ_2$.)

In Table \ref{tab:Mn}, we list low-lying approximabilities lying in the $n$-Lagrange and hence the $n$-Markoff spectrum. The coincidence with the known cases shows that even a na\"ive algorithm, based on trying all continued fractions of bounded length and terms, is enough to recover the lowest points and first limit point of the spectrum when these are known.

Note that, except in the cases $n = 1,2,5,6$ where a Markoff-like tree structure obtains, the first few approximability values arise from points $\xi$ whose associated continued fraction expansions have the form
\[
  A, B, A^2 B, A^4 B, A^6 B, \ldots
\]
where $A$ and $B$ are words, up to the first limit point. Examination of subsequent values (not shown in the table) reveals a complementary pattern,
\[
  \ldots, A^5 B, A^3 B, AB,
\]
raising the question of whether it is possible to find and describe discrete portions of the spectrum \emph{after} the first limit point.

\newpage

\begin{longtable}{r|lcc}
$v_p(\Delta_\Phi)$ & $\C$ & $k$ & $\ell$ \\ \hline
\endhead
$0$\tall & $Y^2 = XZ$ & $0$ & $0$ \\ \hline
$1$\tall & $pY^2 = XZ$ & $0$ & $1$ \\ \hline
$2$\tall & $p^2 Y^2 = XZ \sim -X^2 + Y^2 = pXZ$ & $0$ & $2$ \\
& $-aX^2 + Y^2 = pXZ$ & $0$ & $0$ \\
& $Y^2 = pXZ$ & $0$ & $1$ \\ \hline
$3$\tall & $p^3 Y^2 = XZ \sim X Y + pY^2 = pX Z$ & $0$ & $3$ \\
& $-X^2 + pY^2 = pXZ$ & $0$ & $1$ \\
& $-a X^2 + pY^2 = pXZ$ & $0$ & $1$ \\
& $pY^2 = pXZ$ (imprimitive) & $1$ & $0$ \\ \hline
$4$\tall & $p^4 Y^2 = XZ \sim XY + p^2 Y^2 = p XZ \sim -X^2 + Y^2 = p^2 XZ$ & $0$ & $4$ \\
& $-aX^2 + Y^2 = p^2 XZ$ & $0$ & $0$ \\
& $Y^2 = p^2 XZ \sim -X^2 + p^2 Y^2 = pXZ $ & $1$ & $0$ \\
& $-aX^2 + p^2 Y^2 = p XZ $ & $1$ & $0$ \\
& $-pX^2 + p^2 Y^2 = p XZ $ & $1$ & $1$ \\
& $-paX^2 + p^2 Y^2 = p XZ $ & $1$ & $1$ \\
& $p^2 Y^2 = p XZ $ (imprimitive) & $1$ & $1$ \\
\caption{Contribution to the multiplier $m = \prod p^k$ and the conductor $n = \prod p^\ell$ of the spectra of conics with various completions at an odd prime $p$. Here $a$ denotes a quadratic non-residue modulo $p$, and $\sim$ denotes equivalence under $\GL_3(\ZZ_p)$.}
\label{tab:local}
\end{longtable}

\begin{longtable}{r|lcl}
  $n$ & First few approximabilities with associated continued fractions &%
  \begin{tabular}[b]{l}%
    First known\\%
    limit point
  \end{tabular} & Reference \\ \hline
  \endhead
  $1$\tall & 
  \begin{tabular}[t]{cl}
    $\alpha$ & $\xi$ \\ \hline\tall
    $2.2360\ldots$ & $[\bar{1}] \sim (1 + \sqrt{5})/2$ \\
    $2.8284\ldots$ & $[\bar{2}]$ \\
    $2.9732\ldots$ & $[\ba{1122}]$ \\
    $2.9960\ldots$ & $[\ba{111122}]$ \\
    $2.9992\ldots$ & $[\ba{112222}]$
  \end{tabular}
  & $3$ & \cite{MarkoffI,MarkoffII,SchMinI}\hspace{-2pt} \\
  $2$\tall & 
  \begin{tabular}[t]{cll}
    $\alpha$ & $\xi$ & $2\xi$ \\ \hline\tall
    $2.8284\ldots$ & $[\bar{2}] \sim \sqrt{2}             $ & $[\bar{2}]             $ \\
    $3.4641\ldots$ & $[\ba{12}]            $ & $[\ba{12}]            $ \\
    $3.8873\ldots$ & $[\ba{111133}]    $ & $[\ba{111133}]    $ \\
    $3.9799\ldots$ & $[\ba{1123}]        $ & $[\ba{1132}]        $ \\
    $3.9994\ldots$ & $[\ba{112223}]    $ & $[\ba{113222}]    $ \\
    $3.9998\ldots$ & $[\ba{11112133}]$ & $[\ba{11113312}]$ \\
    $3.9999\ldots$ & $[\ba{11222223}]$ & $[\ba{11322222}]$
  \end{tabular}
  & $4$ & \cite{SchMinI,ChaIntrinsic} \\
  $3$\tall & 
  \begin{tabular}[t]{cll}
    $\alpha$ & $\xi$ & $3\xi$ \\ \hline\tall
    $3.4641\ldots$ & $[\ba{12}] \sim \sqrt{3}                   $ & $[\ba{12}]                   $ \\
    $3.6055\ldots$ & $[\bar{3}]                     $ & $[\bar{3}]                     $ \\
    $3.9949\ldots$ & $[\ba{1233}]             $ & $[\ba{1332}]             $ \\
    $3.9999\ldots$ & $[\ba{123333}])      $ & $[\ba{133332}])      $ \\
    $3.9999\ldots$ & $[\ba{12333333}]]$ & $[\ba{13333332}]]$ \\
  \end{tabular}
  & $4$ & \cite{SchMinII, VulakhMarkov, ChaEis}\hspace{-2pt} \\
  $4$\tall &
  \begin{tabular}[t]{clll}
  $\alpha$ & $\xi$ & $2\xi$ & $4\xi$ \\ \hline\tall
    $4.1231\ldots$ & $[\ba{113}] \sim \dfrac{1 + \sqrt{17}\tall}{2}$ & $[\ba{113}]$ & $[\ba{113}]$ \\
    $4.2163\ldots$ & $[\ba{1323}]$ & $[\ba{112}]$ & $[\ba{1323}]$ \\
    $4.3362\ldots$ & $[\ba{1131131323}]$ & $[\ba{112112113113}]$ & $[\ba{1131132313}]$ \\
    $4.3362\ldots$ & $[\ba{(113)^4 1323}]$ & $[\ba{112112(113)^4}]$ & $[\ba{(113)^4 2313}]$ \\
  \end{tabular}
  & \multicolumn{2}{l}{$\dfrac{177 + 37 \sqrt{17}}{76}$} \\
  $5$\tall &
  \begin{tabular}[t]{cll}
    $\alpha$ & $\xi$ & $5\xi$ \\ \hline\tall
    $2.2360\ldots$ & $ [\ba{1}]$ & $[\ba{1}]$            \\
    $4.4721\ldots$ & $ [\ba{4}]$ & $[\ba{4}]$            \\
    $4.5825\ldots$ & $ [\ba{13}]$ & $[\ba{13}]$          \\
    $4.7726\ldots$ & $ [\ba{1144}]$ & $[\ba{1144}]$      \\
    $4.8989\ldots$ & $ [\ba{24}]$ & $[\ba{24}]$          \\
    $4.9590\ldots$ & $ [\ba{114224}]$ & $[\ba{114224}]$  \\
    $4.9752\ldots$ & $ [\ba{133144}]$ & $[\ba{133144}]$  \\
    $4.9839\ldots$ & $  [\ba{1124}]$ & $[\ba{1142}]$     \\
    $4.9976\ldots$ & $  [\ba{111124}]$ & $[\ba{111142}]$ \\
    $4.9986\ldots$ & $  [\ba{1344}]$ & $[\ba{1443}]$     \\
    $4.9997\ldots$ & $ [\ba{113144}]$ & $[\ba{114413}]$  \\
    $4.9999\ldots$ & $ [\ba{114244}]$ & $[\ba{114424}]$  \\
    $4.9999\ldots$ & $  [\ba{134444}]$ & $[\ba{144443}]$
  \end{tabular}
  & $5$ & \cite{SchMinI}\\
  $6$\tall &
  \begin{tabular}[t]{cllll}
    $\alpha$ & $\xi$ & $2\xi$ & $3\xi$ & $6\xi$ \\ \hline\tall
  $3.4641\ldots$ & $[\ba{12}]$ & $[\ba{12}]$ & $[\ba{12}]$ & $[\ba{12}]$ \\
  $4.8989\ldots$ & $[\ba{24}]$ & $[\ba{24}]$ & $[\ba{24}]$ & $[\ba{24}]$ \\
  $5.2915\ldots$ & $[\ba{1114}]$ & $[\ba{1114}]$ & $[\ba{1114}]$ & $[\ba{1114}]$ \\
  $5.7445\ldots$ & $[\ba{1252}]$ & $[\ba{1252}]$ & $[\ba{1252}]$ & $[\ba{1252}]$ \\
  $5.9194\ldots$ & $[\ba{113125}]$ & $[\ba{115213}]$ & $[\ba{115213}]$ & $[\ba{113125}]$ \\
  $5.9254\ldots$ & $[\ba{111532}]$ & $[\ba{111532}]$ & $[\ba{111235}]$ & $[\ba{111235}]$ \\
  \end{tabular}
  & $6$ & \cite{SchMinII} \\
  $7$\tall &
  \begin{tabular}[t]{cll}
    $\alpha$ & $\xi$ & $7\xi$ \\ \hline\tall
  $2.8284\ldots$ & $[\bar{2}]$ & $[\bar{2}]$ \\
  $3.7416\ldots$ & $[\ba{1232}]$ & $[\ba{1232}]$ \\
  $3.7823\ldots$ & $[\ba{122232}]$ & $[\ba{123222}]$ \\
  $3.7835\ldots$ & $[\ba{12222232}]$ & $[\ba{12322222}]$ \\
  $3.7836\ldots$ & $[\ba{1222222232}]$ & $[\ba{1232222222}]$
  \end{tabular}
  & $\dfrac{6\tall}{3 - \sqrt{2}}$ \\
  $8$\tall &
  \begin{tabular}[t]{cllll}
    $\alpha$ & $\xi$ & $2\xi$ & $4\xi$ & $8\xi$ \\ \hline\tall
  $4.1231\ldots$ & $[\ba{113}]$ & $[\ba{113}]$ & $[\ba{113}]$ & $[\ba{113}]$ \\
  $4.8989\ldots$ & $[\ba{1112}]$ & $[\ba{24}]$ & $[\ba{24}]$ & $[\ba{1112}]$ \\
  $4.9999\ldots$ & $[\ba{1113113112}]$ & $[\ba{11311324}]$ & $[\ba{11311423}]$ & $[\ba{1112113113}]$\hspace{-5pt} \\
  $4.9999\ldots$ & $[\ba{1(113)^4112}]$ & $[\ba{(113)^424}]$ & $[\ba{(311)^442}]$ & $[\ba{1112(113)^4}]$ \\
  \end{tabular}
  & $5$ \\
  $9$\tall &
  \begin{tabular}[t]{clll}
    $\alpha$ & $\xi$ & $3\xi$ & $9\xi$ \\ \hline\tall
  $3.6055\ldots$ & $[\bar{3}]$ & $[\bar{3}]$ & $[\bar{3}]$ \\
  $5.2915\ldots$ & $[\ba{1114}]$ & $[\ba{1114}]$ & $[\ba{1114}]$ \\
  $5.3935\ldots$ & $[\ba{111334}]$ & $[\ba{123214}]$ & $[\ba{111433}]$ \\
  $5.3944\ldots$ & $[\ba{11133334}]$ & $[\ba{12333214}]$ & $[\ba{11143333}]$
  \end{tabular}
  & $9 - \sqrt{13}$ \\
  $10$\tall &
  \begin{tabular}[t]{cllll}
    $\alpha$             & $\xi$           & $2\xi$              & $5\xi$          & $10\xi$             \\ \hline
    \tall $4.4721\ldots$ & $[\bar{1}]$         & $[\bar{4}]$     & $[\bar{1}]$         & $[\bar{4}]$     \\
    $4.8989\ldots$       & $[\ba{24}]$         & $[\ba{24}]$     & $[\ba{24}]$         & $[\ba{24}]$     \\
    $5.6180\ldots$       & $[\ba{1111421124}]$ & $[\ba{233255}]$ & $[\ba{1111421124}]$ & $[\ba{233255}]$ \\
    $5.6662\ldots$       & $[\ba{11111124}]$   & $[\ba{2345}]$   & $[\ba{11111142}]$   & $[\ba{2543}]$   \\
    $5.6691\ldots$       & $[\ba{1^{12}24}]$   & $[\ba{234445}]$ & $[\ba{1^{12}42}]$   & $[\ba{254443}]$ 
  \end{tabular}
  & $\dfrac{10\tall}{4 - \sqrt{5}}$
  \\
  $11$\tall &
  \begin{tabular}[t]{cll}
    $\alpha$ & $\xi$ & $11\xi$ \\ \hline\tall
  $2.2360\ldots$ & $[\bar{1}]$ & $[\bar{1}]$ \\
  $3.4641\ldots$ & $[\ba{12}]$ & $[\ba{12}]$ \\
  $3.8729\ldots$ & $[\ba{23}]$ & $[\ba{23}]$ \\
  $3.9799\ldots$ & $[\ba{1123}]$ & $[\ba{1132}]$ \\
  $3.9970\ldots$ & $[\ba{111123}]$ & $[\ba{111132}]$
  \end{tabular}
  & $4$ \\
  $12$\tall &
  \begin{tabular}[t]{cllll}
  $\alpha$ & $\xi, 4\xi$ & $2 \xi$ & $3\xi, 12\xi$ & $6\xi$ \\  
    \hline\tall
  $5.7445\ldots$ & $[\ba{1252}] \sim \frac{3 + \sqrt{33}}{4}$ & $[\ba{1252}]$ & $[\ba{1252}]$ & $[\ba{1252}]$ \\
  $6.0052\ldots$ & $[\ba{12412532}]$ & $[\ba{11152}]$ & $[\ba{12352142}]$ & $[\ba{11125}]$ \\
  $6.0053\ldots$ & \tiny $[\ba{124(1252)^212532}]$ & \tiny $[\ba{11152(1252)^2}]$ & \tiny $[\ba{12352(1252)^2 142}]$ & \tiny $[\ba{11(1252)^2 125}]$\hspace{-10pt}
  \end{tabular}
  & $\dfrac{41 \sqrt{33} - 21}{5 \sqrt{33} + 7}$
  & \\
  $13$\tall &
  \begin{tabular}[t]{cll}
    $\alpha$ & $\xi$ & $13\xi$ \\ \hline\tall
  $2.9732\ldots$ & $[\ba{1122}] \sim \frac{9 + \sqrt{221}}{10}$ & $[\ba{1122}]$ \\
  $3.2811\ldots$ & $[\ba{11212212}]$ & $[\ba{11212212}]$ \\
  $3.2827\ldots$ & $[\ba{(1221)^32112}]$ & $[\ba{1221(2112)^3}]$
  \end{tabular}
  & $\dfrac{195}{104 - 3\sqrt{221}}$
  & \cite{VulakhMarkov} \\ \\
\caption{Some low-lying points in the $n$-Lagrange and $n$-Markoff spectra; conjectures concerning the first limit point; and references for proofs of minimality when known. The symbol $\sim$ denotes equivalence under a linear fractional transformation in $\GL_2\ZZ$. For brevity, the continued fractions are written without term separators (none of the terms exceeds $5$ for these $n$), only the periodic portion of the continued fraction is shown, and the notation $W^k$ indicates that the sequence $W$ of terms is repeated $k$ times. }
\label{tab:Mn}
\end{longtable}

\bibliography{../Master}
\bibliographystyle{amsplain}


\end{document}